\let\@fnsymbol\@arabic
\newtheorem{thm}{Theorem}[section]
\newtheorem{prop}[thm]{Proposition}
\newtheorem{cor}[thm]{Corollary}
\newtheorem{dfn}[thm]{Definition}
\newtheorem{ex}[thm]{Example}
\newtheorem{rmk}{Remark}[section]
\numberwithin{equation}{section}
\title{REDUCTION OF POISSON MANIFOLDS WITH HAMILTONIAN LIE ALGEBROIDS}
\author{Yuji Hirota\thanks{hirota@azabu-u.ac.jp; Division of Integrated Science, Azabu University, Sagamihara, Kanagawa 252-5201, Japan.}
 \and 
Noriaki Ikeda\thanks{nikeda@se.ritsumei.ac.jp; Department of Mathematical Sciences, 
Ritsumeikan University, Kusatsu, Shiga 525-8577, Japan.}
}
\date{}
\begin{document}
\maketitle 

\begin{abstract}
A reduction theorem for Poisson manifolds with Hamiltonian Lie algebroids is presented. 
The notion of compatibility of a momentum section is introduced to the category of Hamiltonian Lie algebroids over Poisson manifolds. 
It is shown that a compatible momentum section is a Lie algebra homomorphism, and then the quotient space of the zero level set of a compatible momentum section 
proves to be a Poisson manifold. 
\end{abstract}
\section{Introduction}

Reduction in symplectic and Poisson geometry is a powerful tool to provide us with a method to construct a new symplectic or Poisson manifolds, which has its roots in classical mechanics. 
In physics, reduction is a systematic manner to eliminate extra variables on a phase space by using symmetry or conservation laws. 
A phase space with symmetry is formulated geometrically to be a symplectic manifold admitting a suitable action of a Lie group which is called a Hamiltonian action. 
Given a Hamiltonian action, there is a map from the space to the dual of its Lie algebra such that a Hamiltonian vector field of which coincides with the infinitesimal generator of the action. 
Such a map is called a momentum map. Most of the readers know that reduction and momentum map theory are inseparable from one another. 
One of the main reduction theorems is due to J. E. Marsden and A. Weinstein \cite{MWred74}--what is called the Marsden-Weinstein reduction.

There are a lot of directions of developing reduction and momentum map theories. 
As one of such directions, it is possible to extend symplectic manifolds to more general ones, 
such as Poisson manifolds, multisymplectic manifolds, hyper/quaternionic K\"{a}hler manifolds and so on. 
It goes without saying that reduction of Poisson manifolds with symmetries has been being 
studied for many years by a lot of mathematicians, originating from the works by Euler, Lagrange and so on. 
For further details and the history including symplectic cases, refer to \cite{MWsom01, ORmom04} and references therein. 
Reduction and momentum map theory in the category of multisymplectic manifolds has been actively developing involving another research fields 
such as classical field theories, higher structures. We just mention some papers as examples \cite{Bred21,BMR22,CFRZhom16}. 
As for the case of hyper/quaternionic K\"{a}hler manifolds, the theory can be found in the work by K. Galicki \& H. B. Lawson and N. Hitchin 
\cite{Ggene87, GLqua88, HKLR87} for instance. 
The momentum map for quaternionic K\"{a}hler manifolds was proposed by K. Galicki and H. B. Lawson. It seems to be different from the usual one for symplectic manifolds. 
In fact, it proves to be a compatible homotopy momentum section of a certain bundle-valued $1$-plectic manifold by the authors \cite{HIgeo24}. 

Recently, the notion of Hamiltonian Lie algebroids over (pre-)symplectic manifolds was introduced by C. Blohmann and A. Weinstein in \cite{BWham18}. 
It is some kinds of generalization of symplectic manifolds with Hamiltonian actions. Objects corresponding to momentum maps are called momentum sections, which are defined to be 
sections of the dual of Lie algebroids with some conditions. For example, the usual momentum map $J:M\to \mathfrak{g}^*$ is thought of as a section of an action algebroid 
$\mathfrak{g}\ltimes M\to M$ in the context of Hamiltonian Lie algebroids. 
The structures of Hamiltonian algebroids can be found in some Hamiltonian mechanics and gauged nonlinear sigma models \cite{HIhom22,Imom19}. 
The authors addressed reduction of them in wider settings in \cite{HIgeo24}. 
Around the same time, C. Blohmann, S. Ronchi and A. Weinstein have extend them to the category of Poisson manifolds in \cite{BRWham23}, and have called them 
Hamiltonian Lie algebroids over Poisson manifolds. 
Incidentally, the notion of momentum sections has been generalized to the category of Courant algebroids \cite{Imom21}.

In the paper, we address reduction of Poisson manifolds with momentum sections for Hamiltonian Lie algebroids. 
We shall show that if a momentum section $\mu$ satisfies the compatibility condition (see Definition \ref{sec2:dfn_compatibility}), the zero level set 
$\mu^{-1}(\boldsymbol{0})$ is Poisson reducible in the sense of J. E. Marsden and T. Ratiu \cite{MRred86}. 
As opposed to the case of $\mathrm{Ad}^*$-equivariant momentum maps for the canonical actions of a Lie group, 
a momentum section $\mu$ for a Hamiltonian Lie algebroid $A$ over a Poisson manifold $M$ is not necessarily a Poisson map from $M$ to $A^*$ in general. 
In fact, Blohmann-Ronchi-Weinstein \cite{BRWham23} showed that $\mu$ with a certain condition preserves their brackets induced from each bivector field. 
Here, a bivector field on $A^*$ is given by the sum of the horizontal lift $\hat{\Pi}_M$ of a Poisson bivector field on $M$ 
and a Poisson bivector filed $\Pi_{A^*}$ associated to the fiber-wise linear Poisson structure on $A^*$. 
However, $\mu$ fails to be a Poisson map in general because $\hat{\Pi}_M + \Pi_{A^*}$ is not necessarily a Poisson bivector field. 
This causes difficulty of the reduction in the setting of Hamiltonian Lie algebroids. 
Our idea for the reduction of $\mu^{-1}(\boldsymbol{0})$ is to impose one geometric condition, called the compatibility condition, on a momentum section $\mu$. 
Then, $\mu$ proves to be a Poisson map, which enable us to practice a consistent reduction. 
In addition, we mention that another idea to construct a Poisson map in the Hamiltonian Lie algebroid setting has been proposed in the authors' other paper \cite{HIcom24}. 
\bigskip 

Throughout the paper, all manifolds and maps between them are assumed to be smooth. The ring of smooth functions on a smooth manifold $M$ is denoted by $C^{\infty}(M)$. 
For a vector bundle $E\to M$, the space of smooth sections of $E$ is denoted by $\varGamma(E)$. In particular, if $E=TM$, we shall write $\mathfrak{X}(M)$ for $\varGamma(TM)$. 
In the case of the exterior bundle $E=\wedge^kT^*M~(k\geqq 1)$, we denote by $\Omega^k(M)$ the space of differential $k$-forms on $M$. 
\section{Hamiltonian Lie algebroids over Poisson manifolds}
First, we make reference to the sign convention of Poisson calculus in the paper. Let $(M,\Pi)$ be a finite-dimensional Poisson manifold with a Poisson bivector field $\Pi$. 
We use the notation $\Pi^{\sharp}$ for a bundle morphism induced by $\Pi$. Concretely, $\Pi^{\sharp}$ is given by 
\begin{equation}\label{sec2:eqn_sign convention}
\Pi^{\sharp}_x:T_x^*M \longrightarrow T_xM,\quad \alpha_x\longmapsto \Pi^{\sharp}_x(\alpha_x):=(\beta_x\mapsto \langle \beta_x\wedge\alpha_x, \Pi_x\rangle)
\end{equation}
for any tangent vector $\alpha_x,\beta_x\in T_x^*M$ at each $x\in M$. Moreover, we denote the Poisson bracket of $f,\,g\in C^{\infty}(M)$ by $\{f,g\}_{\Pi}$. 
Consequently, the sign convention of Poisson calculus which we adopt in the paper is as follows:
\begin{equation*}
 \{f,\,g\}_{\Pi}=\langle\mathrm{d}f\wedge\mathrm{d}g,\,\Pi\rangle = \bigl(\Pi^{\sharp}(\mathrm{d}g)\bigr)f. 
\end{equation*}
As is well-known, the bivector field $\Pi$ defines a Lie bracket on $\Omega^1(M)$ by 
\[
\{\xi,\,\eta\}:=\mathcal{L}_{\Pi^{\sharp}\xi}\eta - \mathcal{L}_{\Pi^{\sharp}\eta}\xi + \mathrm{d}\bigl(\langle \xi\wedge \eta,\,\Pi\rangle\bigr),\quad 
\xi,\,\eta\in \Omega^1(M), 
\]
which is called the Koszul bracket. Here, $\mathcal{L}_{\Pi^{\sharp}\xi}\eta$ stands for the Lie derivative of $\eta$ by $\Pi^{\sharp}\xi$. 
We remark that 
\begin{equation}\label{sec2:eqn_Koszul}
 \mathrm{d}\{f,\,g\}_{\Pi}=-\{\mathrm{d}f,\,\mathrm{d}g\}, 
\end{equation}
or alternatively, 
\begin{equation}\label{sec2:eqn_Koszul2}
 [\Pi^{\sharp}(\mathrm{d}f),\,\Pi^{\sharp}(\mathrm{d}g)] = \Pi^{\sharp}(\{\mathrm{d}f,\,\mathrm{d}g\})
\end{equation}
hold for any $f,g\in C^{\infty}(M)$. 
\medskip 

Next, we shall recall the definition of the notion of a Hamiltonian Lie algebroid over a Poisson manifold. 
Let $A$ be a Lie algebroid over a Poisson manifold $(M,\Pi)$, whose anchor map is $\rho$. 
An $A$-differential operator is denoted by $\eth^A$. Namely, $\eth^A$ is a differential operator on the de Rham complex $\varGamma(\wedge^\bullet A^*)$, given by 
\begin{align}
 (\eth^A\theta)(\alpha_1,\cdots,\alpha_{k+1})&:= \sum_{i=1}^{k+1}(-1)^{i+1}\mathcal{L}_{\rho(\alpha_i)}\bigl(\theta(\alpha_1,\cdots,\check{\alpha}_i,\cdots,\alpha_{k+1})\bigr)\notag \\
 &\hspace{2.0em}+ \sum_{i<j}(-1)^{i+j}\theta([\alpha_i,\alpha_j],\alpha_1,\cdots,\check{\alpha}_i,\cdots,\check{\alpha}_j,\cdots,\alpha_{k+1}), \label{sec2:eqn_diff}
\end{align}
for any $\theta\in\varGamma(\wedge^kA^*),\,\alpha_1,\cdots,\alpha_{k+1}\in \varGamma(A)$. 

Suppose that $A$ is endowed with a vector bundle connection $\nabla^A$. It induces a connection on $A^*$, for which we use the same symbol $\nabla^A$. 
The induced connection on $A^*$ is defined as 
\begin{equation}\label{sec2:eqn_conn1}
(\nabla^A_X \theta)(\alpha) :=\mathcal{L}_X\bigl(\theta(\alpha)\bigr) - \theta(\nabla^A_X\alpha) 
\end{equation}
for any $X\in \mathfrak{X}(M),\,\theta\in\varGamma(A^*)$ and $\alpha\in\varGamma(A)$. 

Define a Lie algebroid connection $\mho$ on the tangent bundle $TM$ as 
\begin{equation}\label{sec2:eqn_conn2}
\mho_{\alpha}^{TM}X := [\rho(\alpha),\,X] + \rho(\nabla^A_X\alpha). 
\end{equation}
It gives rise to a Lie algebroid connection on $\wedge^pTM$ with $p\geqq 1$ (for which we denote by the same symbol as $\mho^{TM}$) by 
\begin{equation}
(\mho_{\alpha}^{TM}P)(\xi_1,\cdots,\xi_p):= \mathcal{L}_{\rho(\alpha)}\bigl(P(\xi_1,\cdots, \xi_p)\bigr) 
  - \sum_{i=1}^pP(\xi_1,\cdots,\mho^{T^*M}_{\alpha}\xi_i,\cdots,\xi_p), 
\end{equation}
where $P\in \mathfrak{X}^p(M)$ and $\xi_1,\cdots,\xi_p\in \Omega^1(M)$.
$\mho^{T^*M}_{\alpha}\xi_i~(i=1,2,\cdots, p)$ are $1$-forms defined by 
\[
 (\mho^{T^*M}_{\alpha}\xi_i)(X) := \rho(\alpha)(\xi_i(X)) - \xi_i\bigl(\mho^{TM}_{\alpha}X\bigr). 
\]

\begin{dfn}[Hamiltonian Lie algebroid over Poisson manifold \cite{BRWham23}]\label{sec2:dfn_ham Lie alg}
Let $A$ be a Lie algebroid over a Poisson manifold $(M,\Pi)$ together with a vector bundle connection $\nabla^A$. 
A momentum section $\mu$ is an element in $\varGamma(A^*)$ which satisfies 
\begin{equation}\label{sec2:dfn_ham Lie alg1}
 \rho(\alpha) = -\Pi^{\sharp}\langle \nabla^A\mu, \alpha\rangle
\end{equation}
for any $\alpha\in\varGamma(A)$. A Lie algebroid $A$ with a momentum section $\mu$ is called a Hamiltonian Lie algebroid if it satisfies both 
\begin{equation}\label{sec2:dfn_ham Lie alg2}
\mho^{TM}\Pi = 0
\end{equation}
and 
\begin{equation}\label{sec2:dfn_ham Lie alg3}
(\eth^A\mu)(\alpha_1,\alpha_2) = \Pi\bigl(\langle \nabla^A\mu, \alpha_1\rangle,\, \langle \nabla^A\mu, \alpha_2\rangle\bigr) 
\end{equation}
for any $\alpha_1,\alpha_2\in\varGamma(A)$. 
\end{dfn}
\bigskip 

The symbol $\langle \nabla^A\mu, \alpha\rangle$ in \eqref{sec2:dfn_ham Lie alg1} stands for a $1$-form given by $\langle \nabla^A\mu, \alpha\rangle(X):=(\nabla^A_X\mu)(\alpha)$ 
for $\mu\in \varGamma(A^*)$ and $\alpha\in\varGamma(A)$. For the sake of simplicity, we shall sometimes write $(\nabla^A\mu)^{\alpha}$ for $\langle \nabla^A\mu, \alpha\rangle$. Generally, 
we use the notation $\theta^{\alpha}$ for the interior product $\imath_{\alpha}\theta$ of $\theta\in \Omega^k(M,A^*)$ by a section $\alpha\in\varGamma(A)$. Namely, if 
$\theta\in \Omega^k(M,A^*)$ with $k\geqq 1$, $\theta^{\alpha}$ is a $k$-form defined by 
\[
\theta^{\alpha}(X_1,\cdots,X_k):=(\imath_{\alpha}\theta)(X_1,\cdots,X_k):= \langle \theta(X_1,\cdots,X_k),\,\alpha\rangle 
\]
where $X_1,\cdots,X_k$ are vector fields. 

\begin{rmk}
Due to the convention of \eqref{sec2:eqn_sign convention}, the sign factor in \eqref{sec2:dfn_ham Lie alg1} is slightly different from the one in \cite{BRWham23}. 
A momentum section satisfying \eqref{sec2:dfn_ham Lie alg3} is said to be bracket-compatible. 
\end{rmk}

In order to broaden the scope of momentum section, we provide the following definition: 

\begin{dfn}
Under the same setting of Definition \ref{sec2:dfn_ham Lie alg}, we let $\mathcal{S}$ be a subset of $\varGamma(A)$. 
A smooth section $\mu\in \varGamma(A^*)$ is called a momentum section of $A$ on $\mathcal{S}$ if the condition \eqref{sec2:dfn_ham Lie alg1} is satisfied for all element in $\mathcal{S}$. 
\end{dfn}
\medskip 

The fundamental example of Hamiltonian Lie algebroids might be the case where a Poisson manifold admits a Hamiltonian action of a Lie group with an $\mathrm{Ad}$-equivariant momentum map. 
In the case, the action algebroid with a trivial connection is considered as a Hamiltonian Lie algebroid (see also the subsection \ref{sec4:title}). 
Moreover, one can naturally regard a Hamiltonian Lie algebroid structure over a symplectic manifold as a Hamiltonian Lie algebroid over a Poisson manifold by a Poisson bivector 
induced from the symplectic form (see the subsection \ref{sec4:title2}). 

\begin{ex}
Let $n\geqq 1$. 
$\mathbb{R}^{2n+1}$ is a regular Poisson manifold of rank $2n$ by the bivector field $\Pi=\sum_i\frac{\partial}{\partial q_i}\wedge \frac{\partial}{\partial p_i}$ with coordinates 
$(q_1,\cdots,q_n, p_1,\cdots,p_n, y)$. Consider a cotangent Lie algebroid $A=T^*M$ over $M=\mathbb{R}^{2n+1}$ with a trivial connection $\nabla^A$ by 
$\nabla^A(f\alpha):=\mathrm{\bf d}f\otimes \alpha$ for a $1$-form in the form $f\alpha$ multiplied by a function $f\in C^{\infty}(M)$. 
Define a vector field $\mu$ by 
\[
\mu:= -\sum_{i=1}^n\biggl(q_i\frac{\partial}{\partial q_i}+p_i\frac{\partial}{\partial p_i}\biggr).
\] 
The opposite connection $\nabla^{A}$ on $TM$ is also given by a trivial connection. We have 
\[
\langle \nabla^A\mu,\,\alpha\rangle = -\sum_i(a_i\mathrm{d}q_i+ b_i\mathrm{d}p_i)
\]
for any $1$-form $\alpha=\sum_i(a_i\mathrm{d}q_i+b_i\mathrm{d}p_i)+c\,\mathrm{d}y$. 
By a simple computation, we have $\Pi^{\sharp}\langle \nabla^A\mu,\,\alpha\rangle=-\Pi^{\sharp}(\alpha)$, which means $\mu$ satisfies the condition \eqref{sec2:dfn_ham Lie alg1}. 
That is, $\mu$ is a momentum section with respect to $\nabla^A$ on $\Omega^1(M)$. 
\end{ex}
\bigskip 

Lastly, we shall introduce the notion of compatibility for momentum sections of Lie algebroids over Poisson manifolds~(c.f. \cite{HIgeo24}). 
Let $A$ be a Lie algebroid equipped with a connection $\nabla^A$ over a Poisson manifold $M$, and $\mu$ a momentum section of it. 

\begin{dfn}\label{sec2:dfn_compatibility}
$\mu$ is said to be compatible with $A$ if it satisfies the condition \eqref{sec2:dfn_ham Lie alg3} and moreover, 
$(\imath_{\alpha}\circ\nabla^A)\mu = (\mathrm{d}\circ\imath_{\alpha})\mu$ for any $\alpha\in\varGamma(A)$. 
\end{dfn}

\begin{ex}
Let $\mu$ be a momentum section for a Hamiltonian Lie algebroid $A$ over a Poisson manifold $M$ equipped with a vector bundle connection $\nabla^A$. 
Assume that the basic curvature of $\nabla^A$ is zero, that is, $\nabla^A$ satisfies the following equation 
\[
[\alpha,\nabla^A_X\beta] + [\nabla^A_X\alpha,\beta]-\nabla^A_X[\alpha,\beta] -\nabla^A_{\rho(\nabla^A_X\alpha)}\beta + \nabla^A_{\rho(\nabla^A_X\beta)}\alpha=0 
\]
for any $\alpha,\beta\in \varGamma(A)$ and $X\in \mathfrak{X}(M)$~{\rm (}see \cite{Bgeo06, HIcom24}{\rm )}. 

Let $\varGamma(A)_C$ be the subset of $\varGamma(A)$ which consists of all smooth section $\alpha$ of $A$ such that $\nabla^A\alpha=0$. 
From the assumption, we have $\nabla^A[\alpha,\beta]=0$ holds for any $\alpha,\beta\in \varGamma(A)_C$. Namely, $\varGamma(A)_C$ is closed under the Lie bracket $[\cdot,\cdot]$ in $\varGamma(A)$. 
Moreover, we have
\[
\langle \nabla^A\mu,\,\alpha\rangle = \mathrm{d}\langle\mu,\,\alpha\rangle - \langle\mu,\,\nabla^A\alpha\rangle = \mathrm{d}\langle\mu,\,\alpha\rangle. 
\]
Thus, $\mu$ is a compatible momentum section on $\varGamma(A)_C$. 
\end{ex}

If $\mu$ is compatible with $A$, then both 
\begin{equation}\label{sec2:eqn_comp01}
\rho(\alpha) = -\Pi^{\sharp}({\rm d}\mu^{\alpha}) 
\end{equation}
and 
\begin{equation}\label{sec2:eqn_comp02}
\rho(\alpha)\mu^\beta-\rho(\beta)\mu^{\alpha} - \mu^{[\alpha,\,\beta]} = \Pi(\mathrm{d}\mu^{\alpha},\,\mathrm{d}\mu^{\beta})
\end{equation}
hold for any section $\alpha,\,\beta$ of $A$. Substituting \eqref{sec2:eqn_comp01} to \eqref{sec2:eqn_comp02}, we can get the following proposition: 

\begin{prop}
If a momentum section $\mu$ is compatible with $A$, it is a Lie algebra homomorphism from $\varGamma(A)$ to $C^{\infty}(M)$, i.e., 
\[
 \mu^{[\alpha,\,\beta]} = \{\mu^{\alpha},\,\mu^{\beta}\}_{\Pi} 
\]
holds for any $\alpha,\,\beta\in \varGamma(A)$. 
\end{prop}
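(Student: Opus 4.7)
The plan is to reduce the statement to a direct substitution using the two identities \eqref{sec2:eqn_comp01} and \eqref{sec2:eqn_comp02}, together with the sign convention for the Poisson bracket fixed at the beginning of the section. No differential-geometric machinery beyond what is quoted above should be needed; the whole proof is a sign-bookkeeping exercise.

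First I would unwind the three quantities appearing in \eqref{sec2:eqn_comp02} in terms of the bracket $\{\,\cdot\,,\,\cdot\,\}_{\Pi}$. Using the convention $\{f,g\}_{\Pi}=(\Pi^{\sharp}(\mathrm{d}g))f$, the right-hand side reads $\Pi(\mathrm{d}\mu^{\alpha},\mathrm{d}\mu^{\beta})=\{\mu^{\alpha},\mu^{\beta}\}_{\Pi}$. For the two derivative terms on the left, I would apply \eqref{sec2:eqn_comp01} to rewrite $\rho(\alpha)=-\Pi^{\sharp}(\mathrm{d}\mu^{\alpha})$ and $\rho(\beta)=-\Pi^{\sharp}(\mathrm{d}\mu^{\beta})$, which gives
\[
\rho(\alpha)\mu^{\beta}=-\bigl(\Pi^{\sharp}(\mathrm{d}\mu^{\alpha})\bigr)\mu^{\beta}=-\{\mu^{\beta},\mu^{\alpha}\}_{\Pi}=\{\mu^{\alpha},\mu^{\beta}\}_{\Pi},
\]
and symmetrically $\rho(\beta)\mu^{\alpha}=-\{\mu^{\alpha},\mu^{\beta}\}_{\Pi}$.

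Substituting these expressions back into \eqref{sec2:eqn_comp02} yields
\[
\{\mu^{\alpha},\mu^{\beta}\}_{\Pi}-\bigl(-\{\mu^{\alpha},\mu^{\beta}\}_{\Pi}\bigr)-\mu^{[\alpha,\beta]}=\{\mu^{\alpha},\mu^{\beta}\}_{\Pi},
\]
from which the desired identity $\mu^{[\alpha,\beta]}=\{\mu^{\alpha},\mu^{\beta}\}_{\Pi}$ drops out immediately.

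The only potential obstacle is the careful tracking of signs, since the paper's sign convention \eqref{sec2:eqn_sign convention} differs from that of \cite{BRWham23}, and the momentum-section condition \eqref{sec2:dfn_ham Lie alg1} carries a corresponding minus sign. As long as one consistently uses $\rho(\alpha)=-\Pi^{\sharp}(\mathrm{d}\mu^{\alpha})$ and $\{f,g\}_{\Pi}=(\Pi^{\sharp}(\mathrm{d}g))f$, the two occurrences of $-\{\mu^{\alpha},\mu^{\beta}\}_{\Pi}$ on the left combine with the right-hand side to collapse to a single copy, producing the homomorphism property.
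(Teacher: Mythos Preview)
Your proof is correct and is exactly the approach the paper takes: the text immediately preceding the proposition says ``Substituting \eqref{sec2:eqn_comp01} to \eqref{sec2:eqn_comp02}, we can get the following proposition,'' and you have carried out precisely that substitution with the sign conventions spelled out.
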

\section{Reduction}

Let $A$ be a Hamiltonian Lie algebroid with an anchor $\rho$ over a Poisson manifold $(M,\Pi)$. 
Suppose that a momentum section $\mu\in \varGamma(A^*)$ is compatible with $A$. 
Let $M_{\mu}$ be the preimage of the zero section of $A^*$ by $\mu$, i.e., 
\[
M_{\mu}=\bigl\{z\in M\,|\, \mu(z)=\boldsymbol{0}\in A^*|_z\bigr\}\overset{\iota}{\hookrightarrow} M. 
\] 
We assume that $M_{\mu}$ is an embedded manifold of $M$. 

The anchor map $\rho$ of $A$ gives rise to a singular distribution $\mathcal{D}_{\rho}$, which is called the characteristic distribution by 
\[
 M\ni x\longmapsto \mathcal{D}_{\rho}(x):=\mathrm{span}\bigl\{\,\rho(\alpha)_x\,|\, \alpha\in \varGamma(A)\,\bigr\}\subset T_xM. 
\]
$\mathcal{D}_{\rho}$ is integrable subbundle of $TM$. For further details of a singular distribution and the integrability, refer to \cite{DZpoi05, Sacc74, Sorb73}. 
We denote by $\mathcal{L}_{\rho}(x)$ a maximal integral manifold of $\mathcal{D}_{\rho}$ containing $x$. 
The set of $1$-forms vanishing on $\mathcal{D}_{\rho}(x)$ at each $x\in M$ is denoted by $\mathrm{ann}(\mathcal{D}_{\rho}(x))$ and is called the annihilator of $\mathcal{D}_{\rho}(x)$. 
That is, 
\[
 \mathrm{ann}\bigl(\mathcal{D}_{\rho}(x)\bigr) := \bigl\{\eta\in T_x^*M\,|\,\eta(X)=0~ \text{for any}~X\in \mathcal{D}_{\rho}(x) \bigr\},\quad x\in M.
\]
Then, we have 
\begin{prop}\label{sec3:prop_vanishing}
If $f, g$ are smooth functions on $M$ such that $(\mathrm{d}f)_z,\,(\mathrm{d}g)_z\in \mathrm{ann}\bigl(\mathcal{D}_{\rho}(z)\bigr)$ at each $z\in M_{\mu}$, then 
$(\mathrm{d}\{f,\,g\}_{\Pi})_z\in \mathrm{ann}\bigl(\mathcal{D}_{\rho}(z)\bigr)$. 
\end{prop}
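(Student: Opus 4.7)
The plan is to reduce $(\mathrm{d}\{f,g\}_\Pi)_z(\rho(\alpha)_z)$, for $\alpha\in \varGamma(A)$ and $z\in M_\mu$, to an expression that is manifestly zero under the hypothesis. Since $\mu$ is compatible with $A$, identity \eqref{sec2:eqn_comp01} gives $\rho(\alpha)=-\Pi^\sharp(\mathrm{d}\mu^\alpha)$, so that $\rho(\alpha)(h)=\{\mu^\alpha,h\}_\Pi$ for every $h\in C^\infty(M)$. Taking $h=\{f,g\}_\Pi$ and applying the Jacobi identity for $\{\cdot,\cdot\}_\Pi$ will then yield
\[
(\mathrm{d}\{f,g\}_\Pi)_z\bigl(\rho(\alpha)_z\bigr)=\{\mu^\alpha,\{f,g\}_\Pi\}_\Pi(z)=\{\rho(\alpha)(f),g\}_\Pi(z)+\{f,\rho(\alpha)(g)\}_\Pi(z),
\]
so the task reduces to verifying that each of the two summands on the right vanishes at $z\in M_\mu$.

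To handle these, I would exploit the hypothesis, which exactly says that the functions $\rho(\beta)(f)$ and $\rho(\beta)(g)$ vanish identically on $M_\mu$ for every $\beta\in \varGamma(A)$. Choose a local frame $\alpha_1,\ldots,\alpha_r$ of $A$ near $z$. Since $M_\mu$ is an embedded submanifold, it is locally cut out by the independent equations $\mu^{\alpha_i}=0$, and therefore $\mathrm{ann}(T_zM_\mu)=\mathrm{span}\{\mathrm{d}\mu^{\alpha_i}(z)\}$. As the function $\rho(\alpha)(f)$ vanishes on $M_\mu$, its differential at $z$ lies in this annihilator, so one may write $\mathrm{d}(\rho(\alpha)(f))_z=\sum_i c_i\,\mathrm{d}\mu^{\alpha_i}(z)$ for suitable constants $c_i$. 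Consequently
\[
\{\rho(\alpha)(f),g\}_\Pi(z)=\sum_i c_i\{\mu^{\alpha_i},g\}_\Pi(z)=\sum_i c_i\,\rho(\alpha_i)(g)(z)=0
\]
by the hypothesis on $g$, and the symmetric argument (swapping the roles of $f$ and $g$) disposes of the second summand.

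The main obstacle will be justifying the transversality step: one must know that $\mathrm{ann}(T_zM_\mu)$ is spanned precisely by the differentials $\mathrm{d}\mu^{\alpha_i}(z)$ of the components of $\mu$ in a local frame. This is where the embedded-submanifold assumption on $M_\mu$ — equivalently, $\boldsymbol{0}$ being a regular value of $\mu$ in a local trivialization of $A^*$ — is used in an essential way. Once it is granted, the remainder of the argument is straightforward bookkeeping combining the compatibility relation \eqref{sec2:eqn_comp01}, the Jacobi identity of $\{\cdot,\cdot\}_\Pi$, and the two vanishing conditions on $\mathrm{d}f$ and $\mathrm{d}g$.
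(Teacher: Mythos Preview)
Your approach is essentially the same as the paper's. The paper expands $\rho(\alpha)_z\{f,g\}_\Pi$ via the Leibniz rule for the Lie bracket of vector fields, obtaining $[\rho(\alpha),\Pi^\sharp(\mathrm{d}g)]_zf+\Pi^\sharp(\mathrm{d}g)_z(\rho(\alpha)f)$, and then uses $\rho(\alpha)=-\Pi^\sharp(\mathrm{d}\mu^\alpha)$ together with \eqref{sec2:eqn_Koszul}--\eqref{sec2:eqn_Koszul2} to rewrite the first term as $-\{f,\{\mu^\alpha,g\}_\Pi\}_\Pi(z)$; this is exactly your Jacobi-identity decomposition $\{\rho(\alpha)f,g\}_\Pi(z)+\{f,\rho(\alpha)g\}_\Pi(z)$, just reached by a different bookkeeping.

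Where you are actually more careful than the paper is in justifying why each summand vanishes: the paper simply asserts that $\Pi^\sharp(\mathrm{d}g)_z(\rho(\alpha)f)$ and $\{f,\{\mu^\alpha,g\}_\Pi\}_\Pi(z)$ are zero from $\rho(\alpha)f|_{M_\mu}=0$ and $\{\mu^\alpha,g\}_\Pi|_{M_\mu}=0$, but these are derivative expressions, and one needs precisely your observation that $\mathrm{ann}(T_zM_\mu)$ is spanned by the $(\mathrm{d}\mu^{\alpha_i})_z$, so that $\Pi^\sharp_z(\mathrm{ann}(T_zM_\mu))\subset\mathcal{D}_\rho(z)$ via \eqref{sec2:eqn_comp01}. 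One small caution: ``$M_\mu$ is an embedded submanifold'' is not literally equivalent to ``$\boldsymbol{0}$ is a regular value of $\mu$'' (think of $x\mapsto x^2$ on $\mathbb{R}$), so your parenthetical ``equivalently'' overstates things; what you really need, and what the paper's own argument also tacitly uses, is that locally $M_\mu$ is cut out transversally by the components $\mu^{\alpha_i}$.
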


\begin{proof}
Let $\alpha\in \varGamma(A)$ and $z\in M_{\mu}$. 
We remark that \eqref{sec2:eqn_comp01} holds since $\mu$ is compatible with $A$.
By the assumption, we have $\rho(\alpha)f=0$ on $M_{\mu}$. Therefore, 
\begin{align*}
\bigl(\mathrm{d}\{f,g\}_{\Pi}\bigr)_z\bigl(\rho(\alpha)_z\bigr)&= \rho(\alpha)_z\bigl(\Pi^{\sharp}(\mathrm{d}g)f\bigr) \\
&= [\rho(\alpha),\,\Pi^{\sharp}(\mathrm{d}g)]_zf + \Pi^{\sharp}(\mathrm{d}g)_z\bigl(\rho(\alpha)f\bigr) \\
&= [\rho(\alpha),\,\Pi^{\sharp}(\mathrm{d}g)]_zf \\
&= [\Pi^{\sharp}(\mathrm{d}\mu^{\alpha}),\,\Pi^{\sharp}(\mathrm{d}g)]_zf. 
\end{align*}
From \eqref{sec2:eqn_Koszul2}, \eqref{sec2:eqn_Koszul} and $\{\mu^{\alpha},\,g\}_{\Pi}=0$ it follows that  
\begin{align*}
 [\Pi^{\sharp}(\mathrm{d}\mu^{\alpha}),\,\Pi^{\sharp}(\mathrm{d}g)]_zf &= -\bigl\{f,\,\{\mu^{\alpha},g\}_{\Pi}\bigr\}_{\Pi}(z)=0. 
\end{align*}
Hence, we have $\bigl(\mathrm{d}\{f,g\}_{\Pi}\bigr)_z\bigl(\rho(\alpha)_z\bigr)=0$. Since $\alpha\in \varGamma(A)$ and $z\in M_{\mu}$ are arbitrary, we have the desired result. 
\end{proof}

We write $\mathcal{E}_{\rho}$ for the set of local flows of vector fields in $\mathcal{D}_{\rho}$, i.e., 
\[
\mathcal{E}_{\rho} = \bigl\{\,\phi_t^X\,|\,\text{$\phi_t^X$ is a local flow of $X=\rho(\alpha)$ for some $\alpha\in \varGamma(A)$}\,\bigr\} 
\]
and consider the pseudogroup $\mathcal{P}_{\rho}$ of transformations generated by $\mathcal{E}_{\rho}$  
\[
 \mathcal{P}_{\rho} = \{{\rm id}_M\} \bigcup \Bigl\{\,\phi_{t_1}^{X_1}\circ\cdots\circ\phi_{t_k}^{X_k}\, \bigm|\,k\in \mathbb{N},\, 
    \phi_{t_j}^{X_j}\in \mathcal{E}_{\rho}\, \text{or}\, (\phi_{t_j}^{X_j})^{-1}\in\mathcal{E}_{\rho}\,\Bigr\}, 
\]
where $\mathrm{id}_M$ denotes the identity map of $M$. 
\begin{rmk}
For the definition of a pseudogroup, we refer to Chapter 3 in \cite{ORmom04}. 
\end{rmk}
For the sake of simplicity, we shall use the notation $\phi_{\boldsymbol{t}}$ for an element in the form $\phi_{t_1}^{X_1}\circ\cdots\circ\phi_{t_k}^{X_k}$ in $\mathcal{P}_{\rho}$ 
and write ${\rm Dom}\,\phi_{\boldsymbol{t}}$ for the domain of $\phi_{\boldsymbol{t}}$. 
Define 
\[
 \mathcal{P}^0_{\rho} := \bigl\{\phi_{\boldsymbol{t}}\in \mathcal{P}_{\rho} \,|\, (\mu\circ\phi_{\boldsymbol{t}})(x) = \boldsymbol{0}~(x\in \mathrm{Dom}\,\phi_{\boldsymbol{t}})\bigr\}. 
\]
And moreover, for each $x\in M$ we define 
\[
\mathcal{P}^0_\rho\cdot x := \bigl\{\,\phi_{\boldsymbol{t}}(x)\,|\, \phi_{\boldsymbol{t}}\in\mathcal{P}^0_{\rho},\,{\rm Dom}\,\phi_{\boldsymbol{t}}\ni x\,\bigr\} 
\]
and call it the $\mathcal{P}^0_{\rho}$-orbit through $x$. If $z\in M_{\mu}$, the $\mathcal{P}^0_{\rho}$-orbit through $z$ coincides with the intersection of the leaf 
$\mathcal{L}_{\rho}(z)$ containing $z$ of the singular distribution $\mathcal{D}_{\rho}$ and $M_{\mu}$ i.e., $\mathcal{P}^0_\rho\cdot z = \mathcal{L}_{\rho}(z)\cap M_{\mu}$. 
The relation being in the same $\mathcal{P}^0_{\rho}$-orbit is an equivalence relation. Therefore, $M_{\mu}$ is partitioned into $\mathcal{P}^0_{\rho}$-orbits. We write $\mathcal{M}_{\rho}$ 
for the space of $\mathcal{P}^0_{\rho}$-orbits $M_{\mu}/\mathcal{P}^0_\rho$, and let $\pi_{\rho}:M_{\mu}\to \mathcal{M}_{\rho}$ be the natural quotient map. 

\begin{thm}\label{sec3:main thm}
Assume that the orbit space $\mathcal{M}_{\rho}$ is a smooth manifold such that $\pi_{\rho}$ is a submersion. 
If 
\begin{equation}\label{sec3:eqn_main}
\Pi^{\sharp}_z\bigl(\mathrm{ann}(\mathcal{D}_{\rho}(z))\bigr) \subset T_zM_{\mu} + \mathcal{D}_{\rho}(z)
\end{equation}
holds for each $z\in M_{\mu}$, then 
$\mathcal{M}_{\rho}$ is a Poisson manifold whose Poisson bracket $\{\cdot,\,\cdot\}_{\Pi/}$ is uniquely determined by 
\begin{equation}\label{sec3:eqn_main2}
 \{f,\,g\}_{\Pi/}\circ\pi_{\rho} = \{\tilde{f},\,\tilde{g}\}_{\Pi}\circ\iota 
\end{equation}
for any smooth function $f,g$ on $\mathcal{M}_{\rho}$, where $\tilde{f},\tilde{g}$ are extensions of $f\circ\pi_{\rho},\,g\circ\pi_{\rho}$, respectively, with conditions that 
$(\mathrm{d}\tilde{f})_z,\,(\mathrm{d}\tilde{g})_z\in \mathrm{ann}\bigl(\mathcal{D}_{\rho}(z)\bigr)$ at each $z\in M_{\mu}$. 

Conversely, if $\mathcal{M}_{\rho}$ is endowed with a Poisson structure by \eqref{sec3:eqn_main2}, then the condition \eqref{sec3:eqn_main} is satisfied. 
\end{thm}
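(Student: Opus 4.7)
The plan is to carry out a Marsden--Ratiu style Poisson reduction, with $M_\mu$ playing the role of the constraint submanifold and the integrable distribution $\mathcal{D}_\rho\cap TM_\mu$ (tangent to the $\mathcal{P}^0_\rho$-orbits) playing the role of the distribution to be quotiented out. The three core tasks are: (i) show that admissible extensions $\tilde f$ of $f\circ\pi_\rho$ to $M$ exist; (ii) show that the right-hand side of \eqref{sec3:eqn_main2} is independent of the chosen extensions; and (iii) show that the resulting function on $\mathcal{M}_\rho$ defines a Poisson bracket. The converse will follow by a dual argument.

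For existence, given $f\in C^\infty(\mathcal{M}_\rho)$, the pullback $f\circ\pi_\rho$ on $M_\mu$ annihilates every vector in $\mathcal{D}_\rho\cap TM_\mu$, so at each $z\in M_\mu$ the functional $X\mapsto \mathrm{d}(f\circ\pi_\rho)(z)(X^{\mathrm{tan}})$ on $\mathcal{D}_\rho(z)$ (with $X^{\mathrm{tan}}$ the tangential part) kills $\mathcal{D}_\rho(z)\cap T_zM_\mu$ and thus descends to a functional on the transverse part of $\mathcal{D}_\rho(z)$; choosing an extension $\tilde f$ in a tubular neighborhood whose normal derivative realizes the negative of that functional yields the annihilator condition, and a partition-of-unity argument globalizes this. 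For independence, two admissible extensions $\tilde f_1,\tilde f_2$ differ by a function whose differential at $z\in M_\mu$ annihilates both $T_zM_\mu$ (since the difference vanishes on $M_\mu$) and $\mathcal{D}_\rho(z)$ (by the annihilator assumption), so $\mathrm{d}(\tilde f_1-\tilde f_2)_z\in\mathrm{ann}(T_zM_\mu+\mathcal{D}_\rho(z))$; by \eqref{sec3:eqn_main} this annihilator contains $\Pi^\sharp_z(\mathrm{d}\tilde g_z)$, giving $\{\tilde f_1-\tilde f_2,\tilde g\}_\Pi(z)=0$, with the symmetric argument in the second slot handled by antisymmetry. Finally, Proposition \ref{sec3:prop_vanishing} ensures that $(\mathrm{d}\{\tilde f,\tilde g\}_\Pi)_z\in\mathrm{ann}(\mathcal{D}_\rho(z))$ on $M_\mu$, so $\{\tilde f,\tilde g\}_\Pi|_{M_\mu}$ is $\mathcal{P}^0_\rho$-invariant and, since $\pi_\rho$ is a submersion, descends to a unique smooth function $\{f,g\}_{\Pi/}$ on $\mathcal{M}_\rho$ determined by \eqref{sec3:eqn_main2}.

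The Poisson axioms transfer directly from $(M,\Pi)$: bilinearity and antisymmetry are immediate; the Leibniz rule follows because $\tilde g\tilde h$ is again admissible and extends $(gh)\circ\pi_\rho$ on $M_\mu$; and the Jacobi identity follows because Proposition \ref{sec3:prop_vanishing} tells us that $\{\tilde f,\tilde g\}_\Pi$ is itself an admissible extension of $\{f,g\}_{\Pi/}\circ\pi_\rho$, so iterating \eqref{sec3:eqn_main2} converts the Jacobi identity for $\{\cdot,\cdot\}_\Pi$ into the one for $\{\cdot,\cdot\}_{\Pi/}$.

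For the converse, assume \eqref{sec3:eqn_main2} defines a Poisson structure on $\mathcal{M}_\rho$ and suppose \eqref{sec3:eqn_main} fails at some $z_0\in M_\mu$. Then there exist $\eta_0\in\mathrm{ann}(\mathcal{D}_\rho(z_0))$ and a covector $\xi_0\in\mathrm{ann}(T_{z_0}M_\mu)\cap\mathrm{ann}(\mathcal{D}_\rho(z_0))$ with $\langle\xi_0,\Pi^\sharp_{z_0}\eta_0\rangle\ne 0$. Realizing $\xi_0=\mathrm{d}\tilde f(z_0)$ for an admissible $\tilde f$ vanishing on $M_\mu$ (whose projected function on $\mathcal{M}_\rho$ is zero) and $\eta_0=\mathrm{d}\tilde g(z_0)$ for an admissible $\tilde g$ representing some $g$, the formula \eqref{sec3:eqn_main2} forces $0=\{0,g\}_{\Pi/}\circ\pi_\rho(z_0)=\{\tilde f,\tilde g\}_\Pi(z_0)=\langle\xi_0,\Pi^\sharp_{z_0}\eta_0\rangle\ne 0$, a contradiction. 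I expect the main technical obstacle in the whole argument to be step (i): constructing admissible extensions satisfying the annihilator condition uniformly on $M_\mu$ and, for the converse, with prescribed differential at a prescribed point while also vanishing on $M_\mu$. Here the compatibility of $\mu$ will be decisive, since the identities \eqref{sec2:eqn_comp01}, \eqref{sec2:eqn_comp02} together with the Lie algebra homomorphism property $\mu^{[\alpha,\beta]}=\{\mu^\alpha,\mu^\beta\}_\Pi$ supply a rich family of admissible building blocks (products $\sum_i h_i\mu^{\alpha_i}$, which automatically satisfy the annihilator condition on $M_\mu$), providing the flexibility needed to assemble the required extensions.
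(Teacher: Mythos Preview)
Your proposal is essentially correct and follows the same Marsden--Ratiu reduction strategy as the paper: use condition \eqref{sec3:eqn_main} to prove independence of the choice of admissible extensions, and use Proposition~\ref{sec3:prop_vanishing} to show that $\{\tilde f,\tilde g\}_\Pi|_{M_\mu}$ is constant along the $\mathcal{P}^0_\rho$-orbits (and is itself admissible, which yields Jacobi). Your converse argument via $\xi_0\in\mathrm{ann}(T_{z_0}M_\mu)\cap\mathrm{ann}(\mathcal{D}_\rho(z_0))$ is exactly the paper's, only phrased as a contradiction; the paper realizes such $\xi_0$ as $(\mathrm{d}F)_{z_0}$ with $F$ an explicit affine function in a slice chart for $M_\mu$, which is precisely the tubular-neighborhood construction you sketch in step~(i).

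One correction of emphasis: your closing paragraph overstates the role of compatibility in building extensions. The compatibility of $\mu$ enters the argument \emph{only} through Proposition~\ref{sec3:prop_vanishing} (via \eqref{sec2:eqn_comp01}); it is not needed to construct admissible extensions. The functions $\sum_i h_i\mu^{\alpha_i}$ you propose are indeed admissible on $M_\mu$, but they all vanish there, so they cannot by themselves extend a nonzero $f\circ\pi_\rho$; the existence of admissible extensions (and, for the converse, of admissible functions with prescribed differential at a point) is a purely local matter handled by slice charts, exactly as the paper does and as you already outlined earlier in step~(i). Drop the speculative last paragraph and the argument is complete.
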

\begin{proof}
Assume that \eqref{sec3:eqn_main} holds for any $z\in M_{\mu}$. Let $f,g$ be any smooth function on $\mathcal{M}_{\rho}$ 
and $\tilde{f}, \tilde{g}$ extensions of $f\circ\pi_{\rho}, g\circ\pi_{\rho}$. 
Namely, $\tilde{f}$ and $\tilde{g}$ are smooth functions on $M$ satisfying $\tilde{f}|_{M_{\mu}}=f\circ\pi_{\rho}$ and $\tilde{g}|_{M_{\mu}}=g\circ\pi_{\rho}$, respectively. 
Suppose that their differentials $\mathrm{d}\tilde{f},\,\mathrm{d}\tilde{g}$ vanish on $\mathcal{D}_{\rho}|_{M_{\mu}}$, and 
define a function $\{f,g\}_{\Pi/}$ on $\mathcal{M}_{\rho}$ as \eqref{sec3:eqn_main2}. 
Then, from Proposition \ref{sec3:prop_vanishing}, it follows that $\mathrm{d}\{\tilde{f},\,\tilde{g}\}_{\Pi}$ also vanishes on $\mathcal{D}_{\rho}|_{M_{\mu}}$. 
This implies that $\{\tilde{f},\,\tilde{g}\}_{\Pi}$ is constant on each leaf of $\mathcal{D}_{\rho}|_{M_{\mu}}$. Therefore, $\{f,g\}_{\Pi/}$ is well-defined as a function on $\mathcal{M}_{\rho}$. 
Next, we shall show that the function $\{f,g\}_{\Pi/}$ does not depend on the choice of their extensions. 
Let $\tilde{g}'$ be another extension of $g\circ\pi$ satisfying $(\mathrm{d}\tilde{g}')_z\in \mathrm{ann}\bigl(\mathcal{D}_{\rho}(z)\bigr)$ at each $z\in M_{\mu}$. 
Clearly, $\tilde{g}-\tilde{g}'=0$ on $M_{\mu}$, and $\mathrm{d}(\tilde{g}-\tilde{g}')$ vanishes on $\mathcal{D}_{\rho}|_{M_{\mu}}$. 
Therefore, using the assumption \eqref{sec3:eqn_main}, we have 
\[
\forall z\in M_{\mu};\ \bigl\langle (\mathrm{d}\tilde{f})_z,\, \Pi^{\sharp}_z\bigl(\mathrm{d}(\tilde{g}-\tilde{g}')_z\bigr)\bigr\rangle = 0, 
\]
alternatively 
\[
\forall z\in M_{\mu};\ \{\tilde{f},\,\tilde{g}\}_{\Pi}(z) = \{\tilde{f},\,\tilde{g}'\}_{\Pi}(z). 
\]
This means that the function $\{f,\,g\}_{\Pi/}\circ\pi_{\rho}$ is independent of the choice of extension. From the fact that $\{\cdot,\,\cdot\}_{\Pi}$ is a Poisson bracket, 
we see that $\mathcal{M}_{\rho}$ inherits a Poisson structure $\{\cdot,\,\cdot\}_{\Pi/}$ which satisfies $\pi_{\rho}^*\{\cdot,\,\cdot\}_{\Pi/}=\imath^*\{\cdot,\,\cdot\}_{\Pi}$. 

Conversely, we assume that the orbit space $\mathcal{M}_{\rho}$ is endowed with a Poisson structure determined by \eqref{sec3:eqn_main2}. 
We shall show the relation \eqref{sec3:eqn_main} or, equivalently, 
\begin{equation}\label{sec3:eqn_proof}
\mathrm{ann}\bigl(T_zM_{\mu}\bigr) \cap \mathrm{ann}\bigl(\mathcal{D}_{\rho}(z)\bigr)\subset \mathrm{ann}\Bigl(\Pi_z^{\sharp}\bigl(\mathrm{ann}\bigl(\mathcal{D}_{\rho}(z)\bigr)\bigr)\Bigr) 
\end{equation}
for any point $z$ in $M_{\mu}$. Fix any point $z$ in $M_{\mu}$. 
If $U_z$ is a slice chart for $M_{\mu}$ around $z$, then $\mathcal{D}_{\rho}$ is locally expressed as $\mathcal{D}_{\rho}=(U_z\cap M_{\mu})\times V$ with $V$ a vector subspace of 
$\mathbb{R}^k\oplus\mathbb{R}^{m-k}\cong \mathbb{R}^m$, where $m, k~(k<m)$ are dimensions of $M,\,M_{\mu}$, respectively. 

Take an element $\alpha_z\in \mathrm{ann}\bigl(T_zM_{\mu}\bigr) \cap \mathrm{ann}\bigl(\mathcal{D}_{\rho}(z)\bigr)$ to be arbitrary. 
From $\alpha_z\in \mathrm{ann}\bigl(T_zM_{\mu}\bigr)$, 
$\alpha_z$ can be written locally as $\alpha_z = (\mathrm{d}F)_z$  
with $F\in C^{\infty}(U_z)$ defined as 
\[
F(\boldsymbol{x})=a_1x_{k+1} + a_2x_{k+2}+\cdots +a_{m-k}x_{m},\quad \boldsymbol{x}\in U_z
\]
for some $\boldsymbol{a}=(a_1,\cdots,a_{m-k})\in \mathbb{R}^{m-k}$. Moreover, since $\alpha_z\in \mathrm{ann}\bigl(\mathcal{D}_{\rho}(z)\bigr)$, it turns out that $F$ satisfies 
\begin{equation}\label{sec3:eqn_proof2}
 F|_{U_z\cap M_{\mu}} = 0\quad \text{and}\quad \forall y\in U_z\cap M_{\mu};\ (\mathrm{d}F)_y|_{\mathcal{D}_{\rho}(y)} = \boldsymbol{0}. 
\end{equation}

Next, let $\beta_z\in \mathrm{ann}\bigl(\mathcal{D}_{\rho}(z)\bigr)$ and $G$ a function on $U_z$ such that 
\[
\beta_z=(\mathrm{d}G)_z\quad \text{and}\quad \forall y\in U_z\cap M_{\mu};\ 
(\mathrm{d}G)_y|_{\mathcal{D}_{\rho}(y)} = \boldsymbol{0}. 
\]
If $g$ is a function on $\mathcal{M}_{\rho}$ such that the extension of which is $G$, then 
\[
 \langle \alpha_z,\,\Pi^{\sharp}_z(\beta_z)\rangle = \{F,\,G\}_{\Pi}(z)=\{0,\,g\}_{\Pi/}\circ \pi_{\rho}(z)=0. 
\]
This means that $\alpha_z\in \mathrm{ann}\bigl(\Pi_z^{\sharp}\bigl(\mathrm{ann}\bigl(\mathcal{D}_{\rho}(z)\bigr)\bigr)\bigr)$. Therefore, \eqref{sec3:eqn_proof} holds. 
\end{proof}
\section{Examples}
\subsection{Poisson manifolds with canonical actions of Lie groups}\label{sec4:title}
Let $(M,\Pi)$ be a Poisson manifold admitting a canonical left action of a compact Lie group $G$ with Lie algebra $\mathfrak{g}$. 
Then, the induced Lie algebra action $\varrho:\mathfrak{g}\to \mathfrak{X}(M)$ satisfies $\mathcal{L}_{\xi_M}\Pi=0$ for any element $\xi$ in $\mathfrak{g}$, where $\xi_M=\varrho(\xi)$. 
One can define a Lie algebroid structure on a trivial vector bundle $M\times \mathfrak{g}$ whose anchor map is defined by $\rho(\xi)=-\xi_M$. 
It is called an action Lie algebroid (or a transformation algebroid, see \cite{DZpoi05}), and is denoted by $\mathfrak{g}\ltimes M$. 
Let $J:M\to \mathfrak{g}^*$ be an equivariant momentum map associated to the canonical $\mathfrak{g}$-action. From Corollary 2.6 in \cite{BRWham23}, 
$A=\mathfrak{g}\ltimes M$ is a Hamiltonian Lie algebroid with a trivial connection $\nabla^A=\mathrm{d}$. 
Here, we remark that sections of $A$ are confined to constant ones. Hence, $J$ proves to be compatible with $A$. 

Suppose that the zero element $\boldsymbol{0}$ in $\mathfrak{g}^*$ is a regular value of $J$ and let $M_J=J^{-1}(\boldsymbol{0})$. 
The characteristic distribution of $A$ is given by 
\[
 \mathcal{D}_{\rho}(x) = \mathrm{span}\bigl\{ \xi_M(x) \,|\, \xi\in \mathfrak{g}\bigr\}=\mathfrak{g}\cdot x,\quad x\in M. 
\]
If $v\in \Pi^{\sharp}_x\bigl(\mathrm{ann}(\mathfrak{g}\cdot x)\bigr)$, then, there exists a covector $\alpha\in T_x^*M$ such that 
$v=\Pi^{\sharp}_x(\alpha)$ and $\alpha(\xi_M(x))=0$. For any $\xi\in\mathfrak{g}$, we have 
\[
 \langle(\mathrm{d}J)_x(v),\, \xi_M(x)\rangle = (\mathrm{d}J^{\xi})_x(v) = -\alpha(\xi_M(x)) = 0,
\]
which implies that $v\in \ker(\mathrm{d}J)_x$. Hence, $\Pi^{\sharp}_x\bigl(\mathrm{ann}(\mathfrak{g}\cdot x)\bigr)\subset \ker(\mathrm{d}J)_x$ holds at each $x\in M$. 
As a result, the condition $\Pi^{\sharp}_x\bigl(\mathrm{ann}(\mathfrak{g}\cdot x)\bigr)\subset T_xM_{J} + \mathfrak{g}\cdot x$ is satisfied. 
Therefore, by Theorem \ref{sec3:eqn_main}, the quotient manifold $M_{J}/G$ inherits a Poisson structure from $M$ satisfying \eqref{sec3:eqn_main2}. 

\subsection{Hamiltonian Lie algebroids over symplectic manifolds}\label{sec4:title2}

First, we shall recall the definition of Hamiltonian Lie algebroids over symplectic manifolds. Let $(M,\omega)$ be a symplectic manifold and $A$ a Lie algebroid over $M$ 
whose anchor map is $\rho$. 
Suppose that $A$ is equipped with a vector bundle connection $\nabla^A$. 
One can get a connection (which is denoted by the same letter $\nabla^A$) on $A^*$ and an $A$-connection $\mho^{TM}$ on $TM$ in the same way as 
\eqref{sec2:eqn_conn1} and \eqref{sec2:eqn_conn2}, respectively. 
Moreover, $\mho^{TM}$ can be extended to an $A$-connection on $\wedge^kT^*M$ as
\begin{equation*}
(\mho_{\alpha}^{TM}\eta)(X_1,\cdots,X_k):= \mathcal{L}_{\rho(\alpha)}\bigl(\eta(X_1,\cdots, X_k)\bigr) 
  - \sum_{i=1}^k\eta(X_1,\cdots,\mho^{TM}_{\alpha}X_i,\cdots,X_k) 
\end{equation*}
for any vector fields $X_1,X_2,\cdots,X_k$, where $\eta\in \Omega^k(M)$. 

\begin{dfn}[Hamiltonian Lie algebroid over symplectic manifold \cite{BWham18}]
An element $\mu\in \varGamma(A^*)$ is called a {\rm (}$\nabla^A$-{\rm )}momentum section if 
\begin{equation}\label{sec4:dfn_ham Lie alg1}
 \nabla^A\mu = -\imath_{\rho(\alpha)}\omega
\end{equation}
holds for any $\alpha\in\varGamma(A)$. A Lie algebroid $A$ with a momentum section $\mu$ is called a Hamiltonian Lie algebroid if it satisfies both 
\begin{equation}\label{sec4:dfn_ham Lie alg2}
\mho^{TM}\omega = 0
\end{equation}
and 
\begin{equation}\label{sec4:dfn_ham Lie alg3}
(\eth^A\mu)(\alpha_1,\alpha_2) = \omega(\rho(\alpha_1),\,\rho(\alpha_2))
\end{equation}
for any $\alpha_1,\alpha_2\in\varGamma(A)$. 
\end{dfn}

We remark that the expression of \eqref{sec4:dfn_ham Lie alg1} and \eqref{sec4:dfn_ham Lie alg3} is different from the original one in \cite{BWham18} due to the sign convention. 
If the condition \eqref{sec4:dfn_ham Lie alg2} is satisfied, $A$ is said to be symplectically anchored with respect to $\nabla^A$. 
On the other hand, $\mu$ is said to be bracket-compatible if \eqref{sec4:dfn_ham Lie alg3} is satisfied. 
Denote by $\{\cdot,\, \cdot\}_{\omega}$ a Poisson structure induced by the $2$-form $\omega$, that is, 
\[
\{f,\,g\}_{\omega}:= \omega(X_f,\,X_g),\quad f,g\in C^{\infty}(M), 
\]
where $X_f, X_g$ are Hamiltonian vector fields of $f, g$, respectively. 
A Hamiltonian Lie algebroid over a symplectic manifold $(M,\omega)$ is also the one over the Poisson manifold $(M,\{\cdot,\, \cdot\}_{\omega})$. 

The notion of compatibility of a $\nabla^A$-momentum section $\mu$ is also introduced in the same way as Definition \ref{sec2:dfn_compatibility}. 
Similarly to the Poisson case, the following argument is easily verified. 

\begin{prop}
Let $\mu$ be a $\nabla^A$-momentum section. If $\mu$ is compatible with $A$, then it is a Lie homomorphism from $\varGamma(A)$ to $C^{\infty}(M)${\rm :} 
\[
 \forall \alpha,\,\beta\in \varGamma(A)\ \quad \mu^{[\alpha,\,\beta]} = \{\mu^{\alpha},\,\mu^{\beta}\}_{\omega}. 
\]
\end{prop}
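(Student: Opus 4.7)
The plan is to mimic, step for step, the Poisson-case proof that precedes Section~3. Compatibility of $\mu$ with $A$ gives us two ingredients to combine: the identity $\imath_{\alpha}\nabla^{A}\mu = \mathrm{d}\mu^{\alpha}$ (from Definition~\ref{sec2:dfn_compatibility}) and the bracket-compatibility condition \eqref{sec4:dfn_ham Lie alg3}. First, I would pair compatibility with the symplectic momentum-section equation \eqref{sec4:dfn_ham Lie alg1} (read as $\langle\nabla^{A}\mu,\alpha\rangle = -\imath_{\rho(\alpha)}\omega$) to obtain
\begin{equation*}
 \imath_{\rho(\alpha)}\omega = -\mathrm{d}\mu^{\alpha},
\end{equation*}
which identifies $\rho(\alpha)$ with the Hamiltonian vector field of $\mu^{\alpha}$ (up to the sign fixed by the paper's conventions). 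This is the symplectic analogue of \eqref{sec2:eqn_comp01}.

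Second, I would expand the bracket-compatibility condition \eqref{sec4:dfn_ham Lie alg3} by means of the explicit formula \eqref{sec2:eqn_diff} for $\eth^{A}$, yielding
\begin{equation*}
 \rho(\alpha)\mu^{\beta} - \rho(\beta)\mu^{\alpha} - \mu^{[\alpha,\beta]} = \omega(\rho(\alpha),\rho(\beta)),
\end{equation*}
the symplectic counterpart of \eqref{sec2:eqn_comp02}.

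Third, I would substitute the Hamiltonian-vector-field identification from the first step into each term of the previous equation. The right-hand side $\omega(\rho(\alpha),\rho(\beta))$ collapses to $\omega(X_{\mu^{\alpha}},X_{\mu^{\beta}}) = \{\mu^{\alpha},\mu^{\beta}\}_{\omega}$, while each term $\rho(\alpha)\mu^{\beta}$ becomes a Hamiltonian derivative and hence a Poisson bracket of $\mu^{\alpha}$ and $\mu^{\beta}$. Using antisymmetry of $\{\cdot,\cdot\}_{\omega}$, the two cross terms combine into a single multiple of $\{\mu^{\alpha},\mu^{\beta}\}_{\omega}$, and solving for $\mu^{[\alpha,\beta]}$ delivers the desired equality
\begin{equation*}
 \mu^{[\alpha,\beta]} = \{\mu^{\alpha},\mu^{\beta}\}_{\omega}.
\end{equation*}

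The main (indeed only) obstacle is careful sign bookkeeping: one must check that the sign convention of \eqref{sec2:eqn_sign convention} for the induced Poisson bivector, the sign in \eqref{sec4:dfn_ham Lie alg1} for the momentum section, and the convention $\{f,g\}_{\omega} = \omega(X_{f},X_{g})$ all mesh so that the three Poisson-bracket contributions combine to yield a single $\{\mu^{\alpha},\mu^{\beta}\}_{\omega}$ rather than a spurious scalar multiple. Once that is verified, the computation is literally the same as in the Poisson case, which justifies the paper's claim that the statement is easily verified.
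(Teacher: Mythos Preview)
Your proposal is correct and follows exactly the route the paper intends: the paper gives no separate proof here, merely saying ``Similarly to the Poisson case, the following argument is easily verified,'' and your three steps are precisely the symplectic transcription of the argument leading to equations \eqref{sec2:eqn_comp01}--\eqref{sec2:eqn_comp02} and the ensuing substitution. The only work, as you note, is the sign check, and that goes through under the paper's conventions.
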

\medskip 

The condition \eqref{sec3:eqn_main} is equivalent to that 
\begin{equation}\label{sec4:eqn_main}
\forall z\in M_{\mu}\ \quad \mathcal{D}_{\rho}(z)^{\omega}\subset T_zM_{\mu} + \mathcal{D}_{\rho}(z),
\end{equation}
where $\mathcal{D}_{\rho}(z)^{\omega}$ denotes the symplectic orthogonal space of $\mathcal{D}_{\rho}(z)$ in $T_zM$. 
As a corollary of Theorem \ref{sec3:main thm}, we have 

\begin{cor}
The orbit space $\mathcal{M}_{\rho}$ is a Poisson manifold with Poisson structure $\{\cdot,\,\cdot\}_{\omega/}$ uniquely determined by  
\begin{equation*}
 \{f,\,g\}_{\omega/}\circ\pi_{\rho} = \{\tilde{f},\,\tilde{g}\}_{\omega}\circ\iota 
\end{equation*}
for any smooth function $f,g$ on $\mathcal{M}_{\rho}$ if and only if the condition \eqref{sec4:eqn_main} is satisfied. 
\end{cor}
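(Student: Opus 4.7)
The plan is to deduce the corollary directly from Theorem \ref{sec3:main thm} by recognizing that, in the symplectic setting, condition \eqref{sec3:eqn_main} is merely condition \eqref{sec4:eqn_main} rewritten via the musical isomorphism. So the whole task reduces to verifying one linear-algebraic identity at each $z\in M_\mu$ and then invoking the theorem.

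First, I would write down the Poisson bivector $\Pi$ on $M$ associated to $\omega$, so that $\{\cdot,\,\cdot\}_\Pi$ coincides with $\{\cdot,\,\cdot\}_\omega$, and record how the bundle map $\Pi^{\sharp}:T^*M\to TM$ relates to the musical isomorphism $\omega^{\flat}:TM\to T^*M$, $X\mapsto \imath_X\omega$. Under the sign convention \eqref{sec2:eqn_sign convention}, the two maps are mutually inverse up to a sign; in particular, $\Pi^{\sharp}_z$ is a fiberwise linear isomorphism.

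The main step is then to verify the pointwise equality
\[
\Pi^{\sharp}_z\bigl(\mathrm{ann}(\mathcal{D}_{\rho}(z))\bigr) = \mathcal{D}_{\rho}(z)^{\omega}
\]
for every $z\in M_\mu$. My argument: a vector $v\in T_zM$ lies in $\mathcal{D}_\rho(z)^{\omega}$ iff $\omega_z(v,u)=0$ for all $u\in \mathcal{D}_\rho(z)$, which by nondegeneracy of $\omega_z$ is equivalent to $\omega_z^{\flat}(v)\in \mathrm{ann}(\mathcal{D}_\rho(z))$; applying $\Pi^{\sharp}_z$ (which is $\pm(\omega_z^{\flat})^{-1}$) yields the inclusion $\mathcal{D}_\rho(z)^{\omega}\subset \Pi^{\sharp}_z(\mathrm{ann}(\mathcal{D}_\rho(z)))$, and the reverse inclusion is obtained by reading the argument backwards. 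With this identity in place, the relations \eqref{sec3:eqn_main} and \eqref{sec4:eqn_main} become the same subset inclusion.

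Once this equivalence is established, both directions of the corollary follow by applying Theorem \ref{sec3:main thm} to the Poisson manifold $(M,\{\cdot,\,\cdot\}_\omega)$: its hypothesis \eqref{sec3:eqn_main} is precisely \eqref{sec4:eqn_main}, and the Poisson bracket on $\mathcal{M}_\rho$ uniquely determined by the theorem is exactly the bracket $\{\cdot,\,\cdot\}_{\omega/}$ of the statement. The only real obstacle is cosmetic, namely bookkeeping of signs in the identification $\Pi^{\sharp}=\pm(\omega^{\flat})^{-1}$; since a subspace of $T_zM$ coincides with its negative, any sign ambiguity is harmless for the final inclusion.
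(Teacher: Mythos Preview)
Your proposal is correct and matches the paper's approach: the paper simply asserts that condition \eqref{sec3:eqn_main} is equivalent to \eqref{sec4:eqn_main} and then states the corollary as an immediate consequence of Theorem \ref{sec3:main thm}, without writing out a separate proof. You have supplied the linear-algebraic justification for that equivalence (via $\Pi^{\sharp}_z\bigl(\mathrm{ann}(\mathcal{D}_{\rho}(z))\bigr)=\mathcal{D}_{\rho}(z)^{\omega}$), which is exactly the missing detail, and your handling of the sign ambiguity is fine since subspaces are unaffected.
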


\section{Conclusion}
In the study, we have discussed reduction of Poisson manifolds with Hamiltonian Lie algebroids. 
Unlike the case of a momentum map in symplectic/Poisson geometry, momentum sections for Hamiltonian Lie algebroids are not Poisson maps in general. 
However, we have found that a momentum section is Poisson map if it satisfies the compatibility condition. 
Consequently, in Theorem \ref{sec3:main thm} we have presented the condition 
for the zero space of a momentum section for a Hamiltonian Lie algebroid structure over a Poisson manifold to be Poisson reducible. 
The result demonstrates that Poisson manifolds with momentum sections can be reducible even if they do not admit a Hamiltonian Lie group action. 

\subsection*{Acknowledgments}
We would like to thank the referees for useful comments. 
This work was supported by the research promotion program for acquiring
grants in-aid for JSPS KAKENHI Grant Number 22K03323.

\end{document}